\newtheorem{theorem}{Theorem}[section]
\newtheorem{proposition}[theorem]{Proposition}
\newtheorem{prop}[theorem]{Proposition}
\title{2-walk-regular dihedrants from group-divisible designs}
\author{Zhi Qiao$^1$, Shao Fei  Du$^2$ and Jack H. Koolen$^{1,3,*}$}
\address{$^1$School of Mathematical Sciences, University of Science and Technology of China, 96 Jinzhai Road, Hefei, 230026, Anhui, PR China}
\address{$^2$Department of Mathematics, Capital Normal University, Beijing 100048, PR China}
\address{$^3$Wen-Tsun Wu Key Laboratory of Chinese Academy of Sciences, University of Science and Technology of China, 96 Jinzhai Road, Hefei, 230026, Anhui, PR China}
\thanks{$^*$Corresponding author}
\subjclass[2010]{05E30, 05C50}
\begin{document}
\maketitle

\begin{abstract}In this note, we construct bipartite 2-walk-regular graphs with exactly 6 distinct eigenvalues as  incidence graphs of group-divisible designs with the dual property. For many of them, we show that they are 2-arc-transitive dihedrants. We note that  many of these graphs are not described in Du et al.\cite[Theorem 1.2]{Du08},  
in which they classify the connected 2-arc transitive dihedrants. 
\end{abstract}

\hspace{2cm}

\noindent
{\bf Keywords:}{
2-walk-regular graphs, 
distance-regular graphs, 
association schemes, 
group divisible designs with the dual property, 
relative cyclic difference sets, 
2-arc-transitive dihedrants}
\section{Introduction}
For unexplained terminology, see next section. 
C. Dalf\'{o} et al.\cite{Dalfo11} showed the following result. 

\begin{prop}(cf. \cite[Proposition 3.4, 3.5]{Dalfo11})\label{dalfo}
Let $s,d$ be positive integers. 
Let $\Gamma$ be a connected $s$-walk-regular graph with diameter $D \geq s$ and with exactly $d+1$ distinct eigenvalues. 
Then the following hold:\\
(i) If $d \leq s+1$, then $\Gamma$ is distance-regular;\\
(ii) If $d \leq s+2$ and $\Gamma$ is bipartite, then $\Gamma$ is distance-regular.
\end{prop}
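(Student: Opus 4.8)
The plan is to work entirely inside the adjacency algebra $\mathcal{A}=\langle I,A,A^2,\dots,A^d\rangle$, where $A$ is the adjacency matrix of $\Gamma$, so that $\dim\mathcal{A}=d+1$. Writing $A_0=I,A_1=A,A_2,\dots,A_D$ for the distance matrices, the distance module $\mathcal{D}=\langle A_0,\dots,A_D\rangle$ has dimension $D+1$, and $\Gamma$ is distance-regular precisely when $\mathcal{D}=\mathcal{A}$. Since an $s$-walk-regular graph is in particular walk-regular, hence regular, and $\Gamma$ is connected, the Perron eigenvalue $\theta_0$ is simple and $J=nE_0\in\mathcal{A}$; in the bipartite case $\theta_d=-\theta_0$ is also simple, so the $\pm1$ bipartition matrix $S=nE_d\in\mathcal{A}$ as well. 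Throughout I use that $d\ge D$ always.

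First I would extract the low distance matrices. Because $D\ge s$, every class $j\le s$ is nonempty, and for $0\le i\le s$ and $\dist(u,v)=j\le i$ the entry $(A^i)_{uv}$ counts walks of length $i$, which $s$-walk-regularity makes a constant $c_{ij}$, with $c_{ii}\ge1$ equal to the constant positive number of geodesics. Hence $A^i=\sum_{j=0}^{i}c_{ij}A_j$ exactly, and inverting this triangular system with nonzero diagonal yields $A_j=p_j(A)$ with $\deg p_j=j$ for every $j\le s$; in particular $\langle A_0,\dots,A_s\rangle=\langle I,A,\dots,A^s\rangle\subseteq\mathcal{A}$.

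The heart of the argument, and the step I expect to be the main obstacle, is forcing $d=D$. The idea is that the diameter caps the support of high powers of $A$, after which $s$-walk-regularity collapses them into low degree. For (i) the only case with $d>D$ compatible with $s\le D\le d\le s+1$ is $D=s$, $d=s+1$; but then $A^{s+1}$ is supported on distances $\le s$, where its entries are constant on each class, so $A^{s+1}=\sum_{j\le s}\gamma_jp_j(A)$ has degree $\le s$, forcing $d\le s$, a contradiction. For (ii) the cases $D=s$ are excluded identically, and the remaining case $D=s+1$, $d=s+2$ is where bipartiteness is essential: a walk of length $s+2$ joins vertices whose distance has the parity of $s$, so $A^{s+2}$ is supported only on distances $\le s+1$ of that parity, i.e.\ on distances $\le s$; the same collapse gives $A^{s+2}=\sum_{j\le s}\gamma_jp_j(A)$ and $d\le s+1$, again a contradiction. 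Thus $d=D$ in both parts.

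It remains to promote all distance matrices into $\mathcal{A}$. With $d=D$ one has $\dim\mathcal{A}=D+1=\dim\mathcal{D}$, so it suffices to reach $A_{s+1},\dots,A_D$. When $D=s+1$ I would use $A_{s+1}=J-\sum_{j=0}^{s}A_j\in\mathcal{A}$. When $D=s+2$ (only in the bipartite part) the relation $J=\sum_i A_i$ gives $A_{s+1}+A_{s+2}\in\mathcal{A}$, while $\tfrac12(J+S)=\sum_{i\ \mathrm{even}}A_i$ and $\tfrac12(J-S)=\sum_{i\ \mathrm{odd}}A_i$ separate the even- from the odd-distance matrix, placing both $A_{s+1},A_{s+2}$ in $\mathcal{A}$. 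In every case $\mathcal{D}=\mathcal{A}$; since $\mathcal{A}$ is closed under multiplication, each product $AA_i$ lies in $\mathcal{D}$ while being supported on distances $i-1,i,i+1$, and reading off the necessarily constant coefficients produces the intersection numbers $b_{i-1},a_i,c_{i+1}$, which is exactly distance-regularity.
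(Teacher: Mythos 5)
The paper never proves Proposition~\ref{dalfo}; it is quoted from Dalf\'o et al.\ \cite{Dalfo11}, so there is no internal proof to compare against --- what follows compares your argument with the published one. Your proof is, as far as I can check, complete and correct. The opening step (for $i\le s$, the identity $A^i=\sum_{j\le i}c_{ij}A_j$ with $c_{ii}\ge 1$, inverted to give $A_j=p_j(A)$ with $\deg p_j=j$) is exactly the statement that an $s$-walk-regular graph is $s$-partially distance-regular, which is how \cite{Dalfo11} phrases it in the language of predistance polynomials; from there they invoke their general lemmas that a graph with $d+1$ eigenvalues which is $(d-1)$-partially distance-regular (respectively $(d-2)$-partially distance-regular and bipartite) is distance-regular, via the Hoffman polynomial. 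Your route replaces that machinery with a direct, self-contained dimension count, and you correctly isolate and dispatch the genuinely delicate point, namely ruling out $d>D$: if $D=s$ then every entry of $A^{s+1}$ sits on a class of distance $\le s$ where $s$-walk-regularity makes it constant, so $A^{s+1}\in\mathrm{span}\{I,A,\dots,A^s\}$ and the minimal polynomial caps the spectrum at $s+1$ values (this kills both $(D,d)=(s,s+1)$ and, in part (ii), $(s,s+2)$); and if $\Gamma$ is bipartite with $D=s+1$, the parity constraint (walk length and distance agree mod $2$) annihilates $A^{s+2}$ on the distance-$(s+1)$ class, producing the same collapse one step later and excluding $(s+1,s+2)$. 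The endgame is also sound: with $d=D$, the matrices $A_{s+1}$ (from $J=nE_0\in\mathcal{A}$, valid since walk-regularity forces regularity and $\Gamma$ is connected) and, when $D=s+2$, also $A_{s+2}$ (separating parities with $S=nE_d$, where $\theta_d=-\theta_0$ is simple by bipartiteness) land in $\mathcal{A}$, whence $\mathcal{D}=\mathcal{A}$ by comparing dimensions, and reading the constant coefficients of $AA_i\in\mathcal{D}$ on the three classes $i-1,i,i+1$ yields the intersection numbers. In short: same circle of ideas as the original, but your version buys elementarity and self-containment, whereas the predistance-polynomial formalism of \cite{Dalfo11} buys uniformity with the rest of their theory of almost distance-regular graphs.
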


In this note, 
we will construct infinitely many bipartite 2-walk-regular graphs with exactly 6 distinct eigenvalues and diameter $D =4$, 
thus showing that Statement (ii) of Proposition \ref{dalfo} is not true for $d=5$ and $s=2$. 
We will construct these graphs as the point-block incidence graphs of certain group-divisible designs having the dual property. 
We will show that infinitely many of these graphs are 2-arc transitive dihedrants, 
and, en passant, provide a new description of 2-arc transitive graphs found by Du et al. \cite{Du98}.
Note that, although most of the graphs we describe may not be new, 
the fact that many of them are 2-arc-transitive dihedrants seems to be new  
as they give counter examples to a result of Du et al. \cite[Theorem 1.2]{Du08} 
in which they classify the connected 2-arc transitive dihedrants. The classical examples $\Gamma(d,q)$ $(d \geq 2$ and $q$ a prime power$)$, as described in Section 4, were not mentioned in  Du et al. for the case $d \geq 3$ and $q$ any prime power, and also for the case$d=2$ and $q$ a power of two.

\section{Preliminaries}


All the graphs considered in this paper are finite, undirected and simple. 
The reader is referred to \cite{BCN} for more information. 
Let $\Gamma$ be a connected graph with vertex set $V=V(\Gamma)$, edge set $E=E(\Gamma)$, and denote $x\sim y$ if the vertices $x,y\in V$ are adjacent. 
The {\em distance} $d_{\Gamma}(x,y)$ between two vertices $x,y\in V$ is the length of a shortest path connecting $x$ and $y$ in $\Gamma$. 
If the graph $\Gamma$ is clear from the context, then we simply use $d(x,y)$. 
The maximum distance between two vertices in $\Gamma$ is the {\em diameter} $D=D(\Gamma)$. 
We use $\Gamma_i(x)$ for the set of vertices at distance $i$ from $x$ and denote $k_i(x)=|\Gamma_i(x)|$. 
For the sake of simplicity, we write $\Gamma(x)=\Gamma_1(x)$ and $k(x)=k_1(x)$. 
The {\em valency} of $x$ is the number $|\Gamma(x)|$ of vertices adjacent to it. 
A graph is {\em regular} with valency $k$ if the valency of each of its vertices is $k$.

The {\em distance-$i$ matrix} $A_i=A(\Gamma_i)$ is the matrix whose rows and columns are indexed by the vertices of $\Gamma$ and the $(x,y)$-entry is $1$ whenever $d(x,y)=i$ and $0$ otherwise. 
The {\em adjacency matrix} $A$ of $\Gamma$ equals $A_1$.

A connected graph $\Gamma$ with diameter $D$ is called {\em distance-regular} 
if there are integers $b_i$, $c_i$ $(0\leq i\leq D)$ such that for any two vertices $x,y\in V(\Gamma)$ with $d(x,y)=i$, 
there are precisely $c_i$ neighbours of $y$ in $\Gamma_{i-1}(x)$ and $b_i$ neighbours of $y$ in $\Gamma_{i+1}(x)$, 
where we define $b_D=c_0=0$. 
In particular, any distance-regular graph is regular with valency $k=b_0$. 
We write $a_i := k - b_i -c_i$ where $i =1,2, \ldots, D$. 


We generalize the intersection numbers of distance-regular graphs to all graphs. 
For a graph $\Gamma$ and two verices $x,y\in V$ with $d(x,y)=i$, we define $p^i_{j h}(x,y)=|\Gamma_j(x)\cap\Gamma_h(y)|$. 
Then we define $c_i(x,y)=p^i_{i-1,1}(x,y)$, $a_i(x,y)=p^i_{i 1}(x,y)$, $b_i(x,y)=p^i_{i+1,1}(x,y)$ and we have $k(y)=c_i(x,y)+a_i(x,y)+b_i(x,y)$. 

A graph is {\em $t$-walk-regular} if the number of walks of every given length $l$ between two vertices $x,y\in V$ depends on the distance between them, provided that $d(x,y)\leq t$ (where it is implicitly assumed that the diameter of the graph is at least $t$). 

If a graph $\Gamma$ is $t$-walk-regular, then for any two vertices $x$ and $y$ at distance $i$, 
the numbers $c_i=c_i(x,y), a_i=a_i(x,y), b_i=b_i(x,y)$ are well-defined for $0 \leq i\leq t$ (see \cite[Proposition 3.15]{Dalfo11}) and the numbers $p^i_{j h}=p^i_{j h}(x,y)$ are also well-defined for $0\leq i,j,h\leq t$ (see \cite[Proposition 1]{Dalfo10}). 
And in a $t$-walk-regular graph $\Gamma$, for any vertex $x$, use the relation $k_{i-1}(x)b_{i-1}= k_{i}(x)c_{i}$, we obtain that $k_i=k_i(x)$ are well-defined for $0\leq i\leq t$. 
If the diameter $D=t+1$, then $k_{t+1}=k_{t+1}(x)$ is also well-defined.

Let $\Gamma$ be a graph. 
The {\em eigenvalues} of $\Gamma$ are the eigenvalues of its adjacency matrix $A$. 
We use $\{\theta_0>\cdots>\theta_d\}$ for the set of distinct eigenvalues of $\Gamma$. 
If $\Gamma$ has diameter $D$, then since $I, A, \ldots, A^D$ are linearly independent, it follows that $d\geq D$. 
The {\em multiplicity} of an eigenvalue $\theta$ is denoted by $m(\theta)$. 

Let $\Gamma$ be a graph. 
Let $\Pi= \{P_1, P_2, \ldots, P_t\}$ be a partition of the vertex set of $\Gamma$ where $t$ is a positive integer. 
We say $\Pi$ is an equitable partition if there exists non-negative integers $q_{ij}$ $1 \leq i, j \leq t$ such that any vertex in $P_i$has exactly $q_{ij}$ neighbours in $P_j$. 
The $(t \times t)$-matrix $Q = (q_{ij})_{1 \leq i,j \leq t}$ is called the quotient matrix of $\Pi$. 
If $\Pi$ is equitable, the distribution diagram with respect to $\Pi$ is the diagram in which we present each $P_i$ by a balloon such that the balloon representing $P_i$ is joined by a line segment to the balloon representing $P_j$ if $q_{ij} >0$ and we will write the number $q_{ij}$ just 
above the line segment close to the balloon representing $P_i$. 
Inside the balloon representing $P_i$, we write $p_i:= \# P_i$. 
For example the distance-partition of of a vertex $x$ of a distance-regular graph with diameter $D$ has distribution diagram as in FIGURE \ref{fg:1}
\\ \\ \\ 
%

\begin{figure}[h!]
	\scalebox{.75}{
		\includegraphics{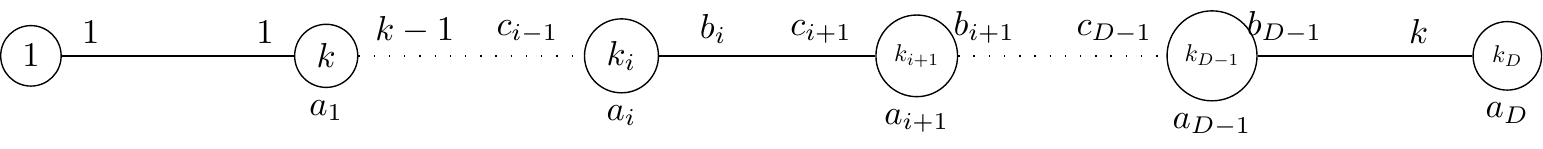}
	}
	\caption{}
	\label{fg:1}
\end{figure}

Let $G$ be a group with identity $1$ and let $Q$ be a subset of $G^*:= G-\{1\}$ closed under taking inverses. 
Then the {\em Cayley graph} Cay$(G,Q)$ is the undirected graph with 
vertex set $G$ and edge set $E($Cay$(G,Q))=\{ \{g,h\} \mid g^{-1}h\in Q\}$. 
It is known that Cay$(G, Q)$ is vertex-transitive and it is connected if and only if $Q$ generates $G$. 

The {\em dihedral group} of order $2n$ is the group $D_{2n}=\langle a,b\mid a^n=1=b^2, 
bab=a^{-1}\}$. 
Let $Q$ be a subset of $D^*_{2n}$ closed under taking inverses. 
The graph $Cay(D_{2n},Q)$ is called a dihedrant and is denoted by Dih$(2n,S,T)$ where 
$Q=\{a^i\mid i\in S\}\cup \{a^j b\mid j\in T\}$.

Let $G$ be a finite group of order $mn$ and $N$ a normal subgroup of $G$ of order $n$. 
A $k$-element subset $D$ of $G$ is called an {\em $(m, n; k, \lambda)$-relative difference set} in $G$ relative to $N$ 
if every element in $G\setminus N$ has 
exactly $\lambda$ representations $r_1 r_2^{-1}$ (or $r_1-r_2$ if $G$ is additive) with $r_1, r_2\in D$, 
and no non-identity element in $N$ has such a representation. 
When $n=1$, $D$ is a {\em $(m,k,\lambda)$-difference set}. 
A difference set or relative difference set is called {\em cyclic} if the group $G$ is cyclic. 
Note that any cyclic relative difference set or difference set can be seen as a relative difference set of $Z_{mn}$ or difference set of $Z_m$, respectively. 

A {\em incident structure} consists of a set $\mathcal P$ of points, a set $\mathcal B$ of blocks (disjoint from $\mathcal P$), 
and a relation $I \subseteq  \mathcal P\times \mathcal B$ called {\em incidence}. 
If $(p,B)\in I$, then we say the point $p$ and the block $B$ are {\em incident}. 
We usually will consider the blocks $B$ as subsets of $\mathcal P$.
If $\mathcal I=\{\mathcal P, \mathcal B, I\}$ is an incidence structure, then its {\em dual} incident structure is given by 
$\mathcal I^*=\{ \mathcal B,\mathcal P, I^*\}$, where $I^*=\{(B,p) \mid (p,B)\in I\}$. 
The {\em incident graph} of $\Gamma(\mathcal I)$ of an incident structure $\mathcal I$ is 
the graph with vertex set $\mathcal P\cup \mathcal L$, where two vertices are adjacent if and only if they are incident. 

A {\em group divisible design} $GDD(n,m;k;\lambda_1,\lambda_2)$ is an ordered triple $(\mathcal P,\mathcal G,\mathcal B)$ 
where $\mathcal P$ is a set of points, 
$\mathcal G$ is a partition of $\mathcal  P$ into $m$ sets of size $n$, each set being called a {\em group}, 
and $\mathcal B$ is a collection of subset of $\mathcal P$, called {\em blocks}, each of size $k$, 
such that 
\begin{enumerate}[i)]
	\item each pair of points that occur together in the same group occur together in exactly $\lambda_1$ blocks, 
	\item each pair of points that occur together in no group occur together in exactly $\lambda_2$ blocks. 
\end{enumerate}

The {\em dual} of a group divisible design is the corresponding dual incident structure. 
A group divisible design $GDD(n,m;k;\lambda_1,\lambda_2)$ has the {\em dual property} 
if its dual is a group divisible design with the same parameters, and the it is denoted by $GDDDP(n,m;k;\lambda_1,\lambda_2)$. 

Let $X$ be a finite set and $\mathbb C^{X\times X}$ the set of complex matrices with rows and columns indexed by $X$. 
Let $\mathcal R=\{R_0, R_1, \ldots , R_n\}$ be a set of non-empty subsets of $X\times X$. 
For each $i$, let $A_i\in \mathbb C^{X\times X}$ be the adjacency matrix of the (in general, directed) graph $\Gamma^{\mathcal R}_i:=(X,R_i)$. 
We call $\Gamma^{\mathcal R}_i$ the {\em relation graph} of $\mathcal R$ with respect to relation $R_i$. 
The pair $(X,\mathcal R)$ is an {\em association scheme} with $n$ {\em classes} if 
\begin{enumerate}[i)]
	\item $A_0=I$, the identity matrix, 
	\item $\sum_{i=0}^n A_i=J$, the all ones matrix, 
	\item $A_i^t\in\{A_0,A_1,\ldots, A_n\}$ for $0\leq i \leq n$, 
	\item $A_i A_j$ is a linear combination of $A_0, A_1, \ldots, A_n$ for $0\leq i, j\leq n$. 
\end{enumerate}
The vector space $\bf A$ spanned by $A_i$ is the {\em Bose-Mesner algebra} of $(X,\mathcal R)$. 

We say that $(X,\mathcal R)$ is {\em commutative} if $\bf A$ is commutative, 
and that $(X,\mathcal R)$ is {\em symmetric} if the $A_i$ are symmetric matrices. 
A symmetric association scheme is commutative. 
The distance distribution diagram of the relation graph of the relation $R_1$ of a symmetric association scheme $(X,\mathcal R)$ is the distribution diagram with respect to the equitable partition $\Pi = \{P_0, P_1, \ldots, P_n\}$ of $\Gamma^{\mathcal R}_1$, where $P_i = \{ y \in X \mid (x,y) \in R_i\}$ for $i=0,1, \ldots, n$ and $x$ a fixed vertex of $X$.

\section{Group-divisible designs having the dual property}
In this section we will construct bipartite 2-walk-regular graphs with diameter 4 having exactly 6 distinct eigenvalues, 
as the point-block incidence graph of certain group-divisible designs having the dual property. 

\begin{theorem}\label{gdddp}
Let ${\mathcal D}$ be a (non-empty) GDDDP$(n, m; k; 0 ,\lambda_2)$ with $n, m \geq 2$ 
and let $\Gamma := \Gamma({\mathcal D})$ be the point-block incidence graph of $\mathcal D$. 
Let $A$ be the adjacency matrix of $\Gamma$. 
Then $\Gamma$ is a relation graph, say with respect to (connecting) relation $R$, of a symmetric association scheme $\mathcal X$ with 5 classes, such that the distribution diagram of $\mathcal X$ with respect to $R$ is as in FIGURE \ref{fg:2}, where $k_4=n-1$, $c_2=\lambda_2$ and $b_2'=k_4c_2$.  
In particular, $\Gamma$ is a bipartite 2-walk-regular graph with diameter 4 and exactly 6 distinct eigenvalues. 

%

\begin{figure}[h!]
	\includegraphics{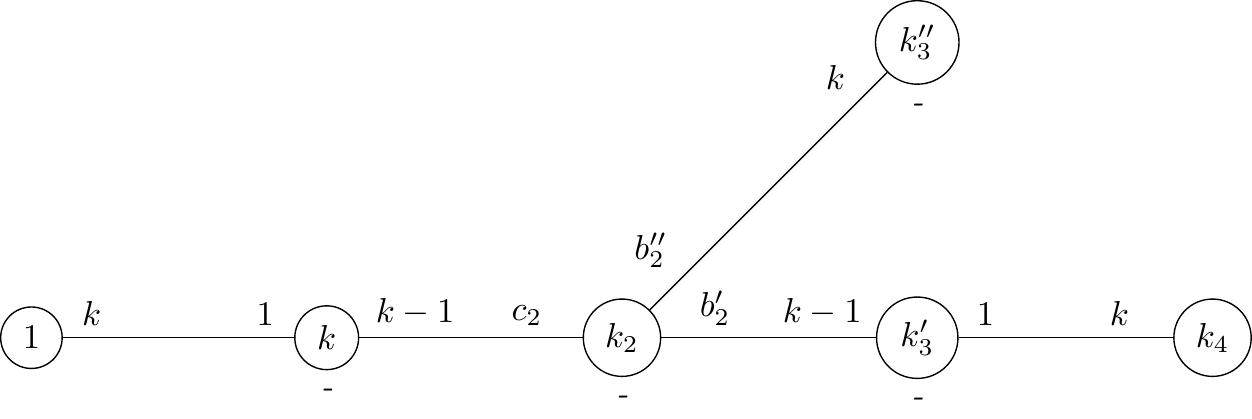}
	\caption{}
	\label{fg:2}
\end{figure}

\end{theorem}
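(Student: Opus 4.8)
The plan is to realize $\Gamma$ as the relation graph $\Gamma^{\mathcal R}_1$ of an explicitly described symmetric association scheme, read the diagram off by counting, and finish with a spectral computation.

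First I would fix notation. By the dual property ${\mathcal D}$ has $b=mn$ blocks and each point lies in exactly $k$ of them, so the point--block incidence matrix $N$ is a square $mn\times mn$ $0/1$ matrix and $A=\left(\begin{smallmatrix}0&N\\N^{\top}&0\end{smallmatrix}\right)$; in particular $\Gamma$ is bipartite and $k$-regular on $2mn$ vertices. Write $C_P$ and $D_P$ for the adjacency matrices on the point set of the relations \emph{distinct points in a common group} and \emph{points in different groups}, and $C_B,D_B$ for the analogous relations on the blocks defined through the dual groups. Counting flags through a point gives the basic identity $\lambda_2 n(m-1)=k(k-1)$, while $\lambda_1=0$ and $\lambda_2\ge 1$ translate into $NN^{\top}=kI+\lambda_2 D_P$ and (dually) $N^{\top}N=kI+\lambda_2 D_B$. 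From these one checks that points in different groups are at distance $2$ and points of a common group at distance $4$ (and the block analogues), and--using $k\ge 2$--that every non-incident point--block pair is at distance $3$; hence $D(\Gamma)=4$.

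The heart of the matter is a tactical decomposition that I expect to be the main obstacle. I would prove that for every group $g$ and every dual group $h$ the incidences between the $n$ points of $g$ and the $n$ blocks of $h$ form either the empty relation or a perfect matching; equivalently, $C_P N=N C_B=:M$. Granting this, the distance-$3$ pairs split consistently and symmetrically into the \emph{covered} pairs $(p,B)$, where $p$ lies in a block of $B$'s dual group (equivalently $B$ meets $p$'s group), recorded by $M$, and the \emph{uncovered} pairs, recorded by $M'=J-N-M$. Without this lemma the two natural invariants of a distance-$3$ pair---whether $p$'s group meets $B$, and whether $p$ is covered by $B$'s dual group---need not agree, the split would fail to be symmetric, and the Bose--Mesner algebra would not close; this is exactly where the dual property together with $\lambda_1=0$ enters in an essential way.

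With the lemma in hand I would set $A_0=I$, $A_1=A$ (the relation $R$), $A_2=\mathrm{diag}(D_P,D_B)$, $A_3=\left(\begin{smallmatrix}0&M\\M^{\top}&0\end{smallmatrix}\right)$, $A_3'=\left(\begin{smallmatrix}0&M'\\M'^{\top}&0\end{smallmatrix}\right)$ and $A_4=\mathrm{diag}(C_P,C_B)$, and verify the four axioms: (i)--(iii) are immediate and $\sum_i A_i=J$, while the multiplicative closure (iv) reduces, after expanding each product, to $NN^{\top}=kI+\lambda_2D_P$, $C_PN=NC_B=M$, $C_P^2=(n-2)C_P+(n-1)I$ and $D_P=J-I-C_P$ (and their block analogues). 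This exhibits the symmetric $5$-class scheme ${\mathcal X}$. Fixing a point $x$ and counting then yields FIGURE \ref{fg:2}: the cells are $\{x\}$, its $k$ blocks, the $n(m-1)$ points at distance $2$, and at distance $3$ the $k(n-1)$ covered blocks and the $n(m-k)$ uncovered blocks, together with the $n-1$ points of $x$'s group; in particular $k_4=n-1$, $c_2=\lambda_2$, and the edge from the distance-$2$ cell to the covered cell has multiplicity $b_2'=(n-1)\lambda_2=k_4c_2$. Finally I would read off the three asserted properties. Since the refinement of the distance partition occurs only at distance $3$, each of distances $0,1,2$ is a single relation; as $A^{\ell}$ lies in the Bose--Mesner algebra, $(A^{\ell})_{xy}$ depends only on the relation, hence only on $d(x,y)$ whenever $d(x,y)\le 2$, so $\Gamma$ is $2$-walk-regular. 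For the spectrum, $D_P$ has eigenvalues $n(m-1),-n,0$ with multiplicities $1,m-1,m(n-1)$, so $NN^{\top}$ has eigenvalues $k^2,\ k(m-k)/(m-1),\ k$ (using the flag identity), and since $\Gamma$ is bipartite its spectrum is $\pm$ their square roots, namely $\pm k,\ \pm\sqrt{k(m-k)/(m-1)},\ \pm\sqrt{k}$. These six values are distinct and nonzero precisely when $2\le k<m$ (the excluded case $k=m$ being a transversal design, for which $\Gamma$ is distance-regular with five eigenvalues), which holds for the designs under consideration; hence $\Gamma$ has exactly six distinct eigenvalues.
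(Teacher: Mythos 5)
Your proposal is correct in substance but takes a genuinely different route from the paper's. The paper argues locally: it fixes a vertex $x$, splits $\Gamma_3(x)$ into $\Gamma_3'(x)$ (distance-$3$ vertices with a neighbour in $\Gamma_4(x)$) and the rest, checks that the refined distance partition is equitable, and establishes the crucial symmetry of the split by a walk count (for $d(x,y)=3$ the number of $3$-walks is $c_3(x,y)c_2=c_3(y,x)c_2$, and $y\in\Gamma_3'(x)$ iff $c_3(x,y)\neq k$); it then gets ``at least $6$ eigenvalues'' by citing Proposition \ref{dalfo}(ii) instead of computing the spectrum. Your $M=C_PN=NC_B$ is exactly the paper's matrix $B_3$, and your tactical-decomposition lemma plays precisely the role of the paper's walk-counting symmetry argument. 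The one step you leave unproved (you flag it as the main obstacle) in fact follows in two lines from identities you already wrote: since $D_P=J-I-C_P$ and $JN=NJ=kJ$, one gets $\lambda_2C_PN=\bigl((k-\lambda_2)I+\lambda_2J-NN^{\top}\bigr)N=(k-\lambda_2)N+\lambda_2kJ-NN^{\top}N$, and by the dual identity the same expression equals $\lambda_2NC_B$, so $C_PN=NC_B$; and since each block meets each group in at most one point while the blocks of a dual group are pairwise point-disjoint, the entrywise equality forces every group-versus-dual-group incidence to be empty or a perfect matching. So the gap is real as written but immediately fillable, and everything else in your plan (closure of the algebra, the diagram counts, $2$-walk-regularity from $A^{\ell}$ lying in the Bose--Mesner algebra) checks out and matches the paper's scheme.

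What your route buys is the explicit spectrum, and this exposes something the paper glosses over: your final parenthetical excluding $k=m$ is doing real work that the theorem's hypotheses do not license. Nothing in ``non-empty GDDDP$(n,m;k;0,\lambda_2)$ with $n,m\geq 2$'' rules out $k=m$, i.e.\ $b_2''=k-n\lambda_2=0$; for instance the GDDDP$(2,2;2;0,1)$ has incidence graph the $8$-cycle, which is distance-regular with only five distinct eigenvalues ($\pm2,\pm\sqrt2,0$), so the statement as printed fails there. In that degenerate case $\Gamma_3''=\emptyset$, and the paper's own argument breaks at the same spot: its appeal to Proposition \ref{dalfo} yields a contradiction only if $\Gamma$ is not distance-regular, which needs both distance-$3$ cells nonempty. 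So your claim that $k<m$ ``holds for the designs under consideration'' is unsubstantiated as stated; the honest fix, for your proof and for the theorem alike, is to add the hypothesis $k<m$ (equivalently $b_2''>0$), after which your argument is complete.
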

\begin{proof}
{\bf the proof is modified}

Note that $\Gamma$ is bipartite. 
Let $x$ be a vertex of $\Gamma$. 
By the dual property, we may assume without loss of generality that $x$ is a point of $\mathcal D$, thus $\mathcal P=\{x\}\cup\Gamma_2(x)\cup\Gamma_4(x)$. 
As $\lambda_2\neq 0$ holds, we find that the group of $\mathcal D$ that contains the vertex $x$ is $\{x\}\cup \Gamma_4(x)$.  Also $c_2(x,y)=\lambda_2$ holds for any vertex $y\in\Gamma_2(x)$. 
Now let $\Gamma_3'(x)$ be the vertices at distance 3 from $x$ with a neighbour in $\Gamma_4(x)$ 
and $\Gamma_3''(x) := \Gamma_3(x) \setminus \Gamma'_3(x)$. 
As $\lambda_1=0$, we see that $b_3(x,y)=1$ for any vertex $y\in\Gamma_3'(x)$. 
It follows that the partition $\Pi = \{ \{x\}, \Gamma(x), \Gamma_2(x), \Gamma_3'(x), \Gamma_3''(x), \Gamma_4(x)\}$ is an equitable partition of $\Gamma$ with distribution diagram as in FIGURE \ref{fg:2} with $k_4=n-1$, $c_2=\lambda_2$ and $b_2'=k_4c_2$. 
Now define the matrix $B_3$ by $(B_3)_{x y} = 1$ if $y \in \Gamma_3'(x)$ and 0 otherwise, where $x$ and $y$ are any two vertices of $\Gamma$. 
Note that  for any pair of vertices  $x$ and $y$ with  $y\in\Gamma_3(x)$, the number of walks of length 3 between $x$ and $y$ equals $c_3(x,y)c_2=c_3(y,x)c_2$, and $c_3(x,y)\neq k$ means $y\in \Gamma_3' (x)$ , which in turn  implies that $B_3$ is symmetric. 
Let $C_3 = A_3 -B_3$, where $A_i$ is the distance-$i$ matrix for $\Gamma$ for $i=0,1,2,3,4$. 
It is straightforward to check that the set of matrices $\{A_0= I, A_1, A_2, B_3, C_3, A_4\}$ satisfies the axioms of a symmetric association scheme. 
That $\Gamma$ is 2-walk-regular follows from the fact that $A_2$ is a relation matrix of the association scheme. 
As $\Gamma$ is the relation graph of a 5-class association scheme, it follows that $\Gamma$ has at most 6 distinct eigenvalues. 
The fact that it has at least 6 eigenvalues follows for Proposition \ref{dalfo}. 
This shows the theorem. 
\end{proof}
{\bf Remark.}\\
(i) The first and second eigenmatrices of the corresponding association schemes, where $b_2''=k-(k_4+1)c_2$ are as follows:

$$
P=\begin{pmatrix}
 1 & k & \frac{(k-1) k}{c_2} & k k_4 & \frac{(k-1) b_2''}{c_2} & k_4 \\
 1 & -k & \frac{(k-1) k}{c_2} & -k k_4 & -\frac{(k-1) b_2''}{c_2} & k_4 \\
 1 & \sqrt{k} & 0 & -\sqrt{k} & 0 & -1 \\
 1 & -\sqrt{k} & 0 & \sqrt{k} & 0 & -1 \\
 1 & \sqrt{b_2''} & -k_4-1 & k_4 \sqrt{b_2''} & -(k_4+1) \sqrt{b_2''} & k_4 \\
 1 & -\sqrt{b_2''} & -k_4-1 & -k_4 \sqrt{b_2''} & (k_4+1) \sqrt{b_2''} & c_2 
\end{pmatrix},
$$
$$
Q=\begin{pmatrix}
 1 & 1 & \frac{(k^2-b_2'') k_4}{ k-b_2''} & -\frac{(k^2-b_2'') k_4}{ b_2''-k} & \frac{(k-1) k}{k- b_2''} & \frac{(k-1) k}{k- b_2''} \\
 1 & -1 & \frac{(k^2- b_2'') k_4}{\sqrt{k} (k- b_2'')} & -\frac{ (k^2-b_2'') k_4}{\sqrt{k} (k- b_2'')} & \frac{\sqrt{ b_2''} (k-1)}{k- b_2''} & -\frac{\sqrt{ b_2''} (k-1)}{k- b_2''} \\
 1 & 1 & 0 & 0 & -1 & -1 \\
 1 & -1 & -\frac{ k^2-b_2''}{\sqrt{k} (k- b_2'')} & \frac{k^2- b_2''}{\sqrt{k} (k- b_2'')} & \frac{\sqrt{ b_2''} (k-1)}{k- b_2''} & -\frac{\sqrt{ b_2''} (k-1)}{k- b_2''} \\
 1 & -1 & 0 & 0 & -\frac{k}{\sqrt{ b_2''}} & \frac{k}{\sqrt{ b_2''}} \\
 1 & 1 & -\frac{k^2- b_2''}{ k-b_2''} & -\frac{k^2- b_2''}{ k-b_2''} & \frac{(k-1) k}{k- b_2''} & \frac{(k-1) k}{k- b_2''}
\end{pmatrix}.
$$
\noindent
(ii)  It is easy to see that if $\mathcal X$ is a symmetric association scheme 
such that for a relation $R$ the distribution diagram of $\mathcal X$ with respect to $R$ is equal to the diagram in FIGURE \ref{fg:2}, then $\mathcal X$ comes from a GDDDP$(n, m;k; 0, \lambda_2)$ with $m=k_4+1$, $\lambda_2=c_2$, as described in the 
above theorem. \\
(iii) If the point-block incidence matrix of a GDDDP is symmetric with zeroes on the diagonal, 
they correspond exactly with the divisible design graphs as defined by Haemers et al. \cite{Haemers11}. 
They correspond to exactly the GDDDP have a polarity without absolute points.

\section{Classical Examples}
In this section we discuss classical examples of group divisible designs with the dual property 
and show that the point-block incidence graphs of these examples are 2-arc transitive dihedrants.
For more information see  \cite{Bose44,Elliot68}.

Let $d \geq 2$ be an integer and let $q$ be a prime power. 
Let $V$ be a vector space of dimension $d$ over the finite field with $q$ elements, GF$(q)$. 
Let $X$  be the set of non-zero elements of $V$. 
For $x \in X$, let $G_x = \{ \alpha x \mid \alpha \in$ GF$^*(q) := $GF$(q) \setminus \{0\} \}$ and ${\mathcal G }:= \{ G_x \mid x \in X\}$. 
Let ${\mathcal B} := \{ x + H \mid  x \in X, H$ a hyperplane in $V, x \not \in H\}$, 
where $x + H = \{x + h \mid h \in H\}$.
Then the design ${\mathcal D}(d, q) := (X, {\mathcal G}, {\mathcal B})$ is a GDDDP$(q-1, \frac{q^d-1}{q-1}; q^{d-1}; 0, q^{d-2})$. 
It is clear that the general linear group GL$(d,q)$ acts as a group of automorphisms of ${\mathcal D}(d, q)$ 
such that its subgroup $Z:= \{ \alpha I_d \mid \alpha  \in $ GF$^*(q) \}$fixes $G_x$ for all $x \in X$.

\begin{proposition}
For all integers $d \geq 2$ and prime powers $q$, the incidence graph $\Gamma(d,q)$ of $\mathcal D(d,q)$ is a 2-arc transitive dihedrant. 
\end{proposition}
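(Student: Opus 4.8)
The plan is to introduce field coordinates that simultaneously expose the cyclic and the reflection symmetry of $\Gamma(d,q)$. Identify $V$ with $\mathrm{GF}(q^{d})$ regarded as a $d$-dimensional vector space over $\mathrm{GF}(q)$, so that $X=\mathrm{GF}(q^{d})^{*}$ is cyclic of order $n:=q^{d}-1$; fix a primitive element $\omega$ and write each point as $\omega^{i}$. Every $\mathrm{GF}(q)$-linear functional on $V$ has the form $y\mapsto \operatorname{Tr}(cy)$ for a unique $c$ up to $\mathrm{GF}(q)^{*}$-scaling, where $\operatorname{Tr}$ is the trace of $\mathrm{GF}(q^{d})/\mathrm{GF}(q)$. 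First I would check that every block can be written uniquely as $B_{c}=\{\,y:\operatorname{Tr}(cy)=1\,\}$ for some $c\in\mathrm{GF}(q^{d})^{*}$; this gives a bijection between blocks and $\mathrm{GF}(q^{d})^{*}$, hence an identification of both colour classes with $\mathbb{Z}_{n}$ (via $\omega^{i}\leftrightarrow i$ and $B_{\omega^{j}}\leftrightarrow j$). With these coordinates the incidence relation becomes the translation-invariant rule that the point $i$ and the block $j$ are adjacent if and only if $i+j\in D$, where $D=\{\,s\in\mathbb{Z}_{n}:\operatorname{Tr}(\omega^{s})=1\,\}$.

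Next I would exhibit a regular dihedral group of automorphisms. Multiplication by $\omega$ is a design automorphism; in the coordinates above it sends the point $i$ to the point $i+1$ and (since $\omega\cdot B_{c}=B_{\omega^{-1}c}$) the block $j$ to the block $j-1$, so it generates a cyclic group $\langle a\rangle\cong\mathbb{Z}_{n}$ acting regularly on each colour class. The trace polarity $b$, defined by sending the point $x$ to the block $B_{x}$ and the block $B_{c}$ to the point $c$, swaps the two colour classes; because the adjacency condition $\operatorname{Tr}(cx)=1$ is symmetric in $x$ and $c$, the map $b$ is an automorphism, and clearly $b^{2}=1$. A direct computation gives $bab=a^{-1}$, so $\langle a,b\rangle\cong D_{2n}$. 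This group has order $2n=|V(\Gamma)|$ and the stabiliser of a point is trivial, whence the action is regular; therefore $\Gamma(d,q)$ is a Cayley graph on $D_{2n}$, that is, a dihedrant whose connection set consists only of reflections.

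It remains to prove $2$-arc transitivity, which I expect to be the main point. Here I would work with $\mathrm{GL}(d,q)$, which acts on $\Gamma$ preserving the bipartition. A $2$-arc is of type (point, block, point) or (block, point, block), and the polarity $b$ interchanges the two types; so it suffices to show that $\mathrm{GL}(d,q)$ is transitive on the $2$-arcs $(x_{1},B,x_{2})$ with $x_{1},x_{2}$ distinct points of a block $B$, and then to merge the two types using $b$. Transitivity on blocks is standard, since $\mathrm{GL}(d,q)$ is transitive on hyperplanes and the stabiliser of a hyperplane scales the nonzero cosets transitively. Fixing a block $B=x_{0}+H$, I would then show that its stabiliser in $\mathrm{GL}(d,q)$ induces the full affine group $\mathrm{AGL}(H)\cong\mathrm{AGL}(d-1,q)$ on the $q^{d-1}$ points of $B$: using the splitting $V=\langle x_{0}\rangle\oplus H$, any prescribed $\phi\in\mathrm{GL}(H)$ and translation vector $h_{0}\in H$ are realised by the linear map that respects the decomposition with $g|_{H}=\phi$ and $g(x_{0})=x_{0}+h_{0}$. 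Since $\mathrm{AGL}(d-1,q)$ is $2$-transitive on its points (recall $d\geq 2$), the stabiliser of $B$ is transitive on ordered pairs of distinct points of $B$, giving transitivity on $2$-arcs of the first type; applying $b$ then yields transitivity on all $2$-arcs, so $\langle\mathrm{GL}(d,q),b\rangle\le\mathrm{Aut}(\Gamma)$ is $2$-arc transitive.

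The main obstacle is the second half: verifying that the trace polarity $b$ is genuinely an automorphism satisfying $bab=a^{-1}$ (so that the symmetry groups of the two halves really fit together into one group acting on $2$-arcs of both types), and pinning down the induced action of the block stabiliser as the full affine group. The remaining checks—that the $B_{c}$ exhaust the blocks, the regularity of $\langle a,b\rangle$, and the $2$-transitivity of the affine group—are routine, and the connectivity needed to place $\Gamma(d,q)$ among the graphs of Theorem~\ref{gdddp} is already guaranteed there.
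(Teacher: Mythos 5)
Your proof is correct, and it shares the paper's two-step skeleton---a regular dihedral subgroup generated by a Singer shift $a$ and a polarity $b$, followed by $2$-arc transitivity obtained from $\mathrm{GL}(d,q)$ with the polarity merging the two types of $2$-arcs---but both steps are realized differently, and one is carried out more completely than in the paper. The paper constructs the polarity abstractly: it orders the points $u_0,\ldots,u_{n-1}$ along the orbit of the Singer element $A_z$ and the covectors $v_0,\ldots,v_{n-1}$ (with $H_{v_i}=\{u \mid v_i^Tu=1\}$) along the induced orbit $H_{v_i}\mapsto H_{v_{i+1}}$, then sets $\phi(u_i)=H_{v_{-i}}$ and $\phi(H_{v_i})=u_{-i}$; that this is a polarity comes down to the identity $v_j^Tu_i=v_0^TA_z^{i-j}u_0=v_{-i}^Tu_{-j}$, and the Cayley presentation is then imported from \cite[Lemma 3.7.2]{AGT}. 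Your trace polarity $x\mapsto B_x$ is exactly this map in the coordinates supplied by the nondegenerate pairing $(x,c)\mapsto\operatorname{Tr}(cx)$, and those coordinates buy you more: the relation $bab=a^{-1}$ becomes a one-line computation, the regular $D_{2n}$ is visible without any auxiliary lemma, and the incidence rule $i+j\in D$ with $D=\{s \mid \operatorname{Tr}(\omega^s)=1\}$ exhibits $\Gamma(d,q)$ outright as $\mathrm{Dih}(2n,\emptyset,D)$ for a cyclic relative difference set, so you recover the paper's Section 5 viewpoint en passant rather than as a separate proposition. For the $2$-arc transitivity step the paper only \emph{asserts} that some $\sigma\in\mathrm{GL}(d,q)$ carries a triple $(x,y,H)$ to $(x',y',H')$ simultaneously; you actually prove the required transitivity, by showing that the setwise stabilizer of a block $B=x_0+H$ induces the full group $\mathrm{AGL}(d-1,q)$ on $B$ via the splitting $V=\langle x_0\rangle\oplus H$ and invoking $2$-transitivity of the affine group (valid since $d\ge 2$, so $|B|=q^{d-1}\ge 2$), which makes your write-up the more self-contained of the two. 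One cosmetic correction: your closing appeal to Theorem~\ref{gdddp} does not apply when $q=2$, since then the group size is $q-1=1$ and that theorem's hypothesis $n,m\ge 2$ fails (indeed $\Gamma(d,2)$ is the incidence graph of a symmetric design and has diameter $3$, not $4$); this is harmless, because nothing in your actual argument uses it---connectivity of $\Gamma(d,q)$ is immediate for all $q$, and the valency bound $q^{d-1}\ge 2$ is all you need to pass from transitivity on $2$-arcs to arc- and vertex-transitivity.
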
  

\begin{proof}

Let $z$ be a primitive element of GF$^*(q^d)$ and define the map $\tau_z:X' \rightarrow X'$ by $\tau_z(x) = zx$ for $x \in X'=$ GF$^*(q^d)$. The map $\tau_z$ has order $n :=q^d-1$.
We can identify the map $\tau_z$ as a linear map $A_z \in $ GL$(d,q)$, by identifying the field GF$(q^d)$ with the vector space GF$(q)^d = X \cup \{0\}$. Note that the group $\langle A_z\rangle $ is the well-known Singer--Zyklus subgroup of  GL$(d,q)$. 
It is clear that $A_z$ maps non-zero vector of $X = $GF$(q)^d$ to non-zero vectors, and  affine hyperplanes to an affine hyperplanes.  Note that $A_z$ is an automorphism 
of the graph $\Gamma(d,q)$. Let $H_y := \{ u \in  X \mid y^Tu = 1\}$. Note that $A_z$ maps, for any non-zero vector $y$,  the affine hyperplane $H_y$ to $H_{y'}$ where $y' = (A_z^T)^{-1}y$. Now let $u_0, u_1 , \ldots ,u_{n-1}$ and $v_0, v_1, \ldots ,v_{n-1}$ be two  orderings of the non-zero vectors of GF$(q)^d$ such that $A_z$ maps $u_i$ to $u_{i+1}$ and $H_{v_i}$ to 
$H_{v_{i+1}}$ (where we take the indices modulo $n$). Now define the map $\phi: X \cup {\mathcal B} \rightarrow X \cup {\mathcal B}$ by $\phi(u_i) = H_{v_{-i}}$ and
$\phi(H_{v_i}) = u_{-i}$ for $i =0,1,2, \ldots, n-1$ (where we take indices modulo $n$). Then, it is easy to see that $\phi$ defines a polarity of the design ${\mathcal D}(d, q)$
and hence an automorphism of the graph $\Gamma(d, q)$. 
Moreover the group generated by $\phi$ and $A_z$ is the dihedral group $D_{2n}$ and acts regularly on the vertex set. 
This shows that $\Gamma(d, q)$ is a dihedrant by \cite[Lemma 3.7.2]{AGT}.  

%

This also means that the graph $\Gamma(d,q)$ is vertex-transitive. 
Let $x$ be an element of $X$. 
Then $y$ is at distance 2 from $x$ in $\Gamma(d,q)$ if and only if $y \in X$ and $x,y$ are linearly independent. 
If $x,y$ and $x',y'$ are two pairs of linear independent vectors in $X$, $H$ is an affine hyperplane containing $x$ and $y$ and $H'$ is an affine hyperplane containing $x'$ and $y'$,
then there exists an element $\sigma$ of GL$(d,q)$ that maps simultaneously $x$ to $x'$, $y$ to $y'$ and $H$ to $H'$. 
This shows that $\Gamma(d,q)$ is a 2-arc transitive dihedrant.
\end{proof}

Now we will consider quotients of $\Gamma(d,q)$, which are also 2-arc transitive dihedrants. 
As $Z$ is a cyclic group of order $q-1$ and let $n  \geq 2$ be a divisor of $q-1$.  
Then $Z$ contains a cyclic subgroup $C= C_{(q-1)/n}$ of order $(q-1)/n$. 
Now consider the set of orbits 
$\mathcal O$ of $\Gamma(d,q)$ under $C$. Let $\Gamma(d, q, n)$ be the graph with 
vertex set $\mathcal O$ and  $O, O' \in {\mathcal O}$ are adjacent if there exists an edge in 
$\Gamma(d, q)$ connecting an element of $O$ with and element of $O'$. Then it is easy to 
verify that  $\Gamma(d, q, n)$ is a 2-arc transitive dihedrant such that the distance-distribution diagram with respect to any vertex is as in FIGURE \ref{fg:2}, where $k=q^{d-1}$, $c_2=\lambda=q^{d-2}(q-1)/n$, $k_4=n-1$, $b_2' = k_4c_2 $. 

\section{Cyclic relative difference sets}

In this section we give another view point on the examples of the last section 
and give a construction for dihedrants from cyclic difference sets. 

\begin{proposition}
Let $D$ be a cyclic relative difference set with parameters $(m,n;k, \lambda)$ with $m,n \geq 2$. 
Define the dihedrant  Dih$(2nm,  \emptyset, D)$.
Then Dih$(2nm, \emptyset, D)$ is the point-block incidence graph of a GDDDP$(m,n; k; 0, \lambda)$. 
In particular, Dih$(2nm, \emptyset, D)$ is a connected 2-walk-regular graph. 
\end{proposition}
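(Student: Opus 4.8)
The plan is to establish a dictionary between the dihedrant $\mathrm{Dih}(2nm,\emptyset,D)$ and the point-block incidence graph of a group-divisible design, and then to invoke Theorem \ref{gdddp} for the $2$-walk-regularity. First I would set up coordinates: write the dihedral group as $D_{2nm}=\langle a,b\mid a^{nm}=b^2=1,\,bab=a^{-1}\rangle$, so that its elements split into the ``rotation'' part $\{a^i\}$ and the ``reflection'' part $\{a^jb\}$. Since the connection set is $Q=\{a^jb\mid j\in D\}$ (there are no rotations, as $S=\emptyset$), every generator is a reflection, and because $b$ is an involution and $(a^jb)^{-1}=a^{-j}b^{-1}\cdot\ldots$—more precisely $(a^jb)^{-1}=b^{-1}a^{-j}=ba^{-j}=a^{j}b$ since $ba^{-j}=a^{j}b$—each reflection is its own inverse, so $Q$ is automatically closed under inverses and no hypothesis of symmetry on $D$ is needed. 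The key structural observation is that, because all generators are reflections, the graph is bipartite with the two parts being $\{a^i\}$ and $\{a^jb\}$; I identify one part with the point set $\mathcal P$ and the other with the block set $\mathcal B$ of the design.

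Next I would make the identification with the design precise. I would let $\mathcal P=\{a^i : 0\le i<nm\}$ be the points and $\mathcal B=\{a^jb: 0\le j<nm\}$ the blocks, and declare the point $a^i$ incident to the block $a^jb$ exactly when they are adjacent in the Cayley graph, i.e. when $(a^i)^{-1}(a^jb)=a^{j-i}b\in Q$, equivalently $j-i\in D\pmod{nm}$. Thus the block $a^jb$ contains precisely the points $\{a^{j-\delta}:\delta\in D\}$, a translate of $-D$, so each block has size $k=|D|$. The groups of the design I would take to be the cosets of the order-$n$ subgroup $N=\langle a^m\rangle$ inside $Z_{nm}\cong\langle a\rangle$, giving $m$ groups each of size $n$ (matching the GDD parameters $(m,n;k;\ldots)$ with $m$ groups of size $n$). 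The counting is then a direct translation of the relative-difference-set axiom: the number of blocks containing two points $a^i,a^{i'}$ is the number of pairs $(\delta,\delta')\in D\times D$ with $i+\delta\equiv i'+\delta'$, i.e. the number of representations of $i-i'$ as a difference $\delta'-\delta$ from $D$. By the $(m,n;k,\lambda)$ relative difference set property this is $\lambda$ when $i-i'\notin N$ (the two points lie in different groups) and $0$ when $i-i'\in N\setminus\{0\}$ (same group, distinct points). This yields exactly $\lambda_1=0$ and $\lambda_2=\lambda$, so $\mathcal D$ is a $\mathrm{GDD}(m,n;k;0,\lambda)$; the dual property follows symmetrically by swapping the roles of the two orbits of $\langle a\rangle$, since the construction is manifestly self-dual under the polarity induced by $b$.

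Having produced a $\mathrm{GDDDP}(m,n;k;0,\lambda)$ whose incidence graph is $\mathrm{Dih}(2nm,\emptyset,D)$, the final clause about $2$-walk-regularity and connectedness I would obtain by citing Theorem \ref{gdddp}, which asserts precisely that the point-block incidence graph of any $\mathrm{GDDDP}(n,m;k;0,\lambda_2)$ with both parameters at least $2$ is a bipartite $2$-walk-regular graph of diameter $4$; here $m,n\ge 2$ is exactly the standing hypothesis. Connectedness can be read off either from Theorem \ref{gdddp} or directly from the Cayley-graph criterion, namely that $\mathrm{Cay}(G,Q)$ is connected iff $Q$ generates $G$: a single reflection $a^jb$ together with another $a^{j'}b$ produces the rotation $a^{j-j'}$, and since $D$ is a relative difference set with $\lambda\ge 1$ the differences $\{\delta-\delta':\delta,\delta'\in D\}$ cover every coset outside $N$, which together with a same-group difference forces the differences to generate all of $\langle a\rangle$, so $Q$ generates $D_{2nm}$.

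The main obstacle I expect is bookkeeping at the level of the group structure rather than any deep argument: one must be careful that the GDD parameter conventions in the statement ($\mathrm{GDDDP}(m,n;k;0,\lambda)$, with the first slot the group size and the second the number of groups) are matched correctly against the relative-difference-set convention $(m,n;k,\lambda)$, where $N$ has order $n$ and $G/N$ has order $m$, so that the ``$m$ groups of size $n$'' claim lines up. A secondary subtlety is verifying that the incidence defined via adjacency really gives distinct blocks (no two group elements $a^jb$ define the same point set), which holds because $D$ determines $j$ up to the single translation parameter and distinct $j$ give distinct translates of $-D$; this injectivity is what guarantees that $|\mathcal B|=nm$ as required and that the incidence structure is genuinely the design rather than a quotient of it.
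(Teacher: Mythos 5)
Your proof is correct, but it takes a genuinely more explicit route than the paper's. The paper's own proof is terse: it observes that the dihedrant is bipartite, checks ``by direct verification'' that the distance-distribution diagram with respect to any vertex is the one in FIGURE~\ref{fg:2}, reads off from that diagram that the graph is the point-block incidence graph of a GDDDP (this is essentially Remark~(ii) following Theorem~\ref{gdddp}), and then cites Theorem~\ref{gdddp} for $2$-walk-regularity. You instead construct the design directly: points are the rotations $a^i$, blocks the reflections $a^jb$, incidence is $j-i\in D$, the groups are the $m$ cosets of the order-$n$ subgroup $N=\langle a^m\rangle$, and the conditions $\lambda_1=0$, $\lambda_2=\lambda$ fall straight out of the relative-difference-set axioms; the dual property comes from the polarity $a^i\mapsto a^{-i}b$, $a^jb\mapsto a^{-j}$ (left multiplication by $b$), which is a graph automorphism swapping the two sides and hence an isomorphism of the design with its dual. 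Your route fills in exactly what the paper leaves to the reader, and it also sorts out the parameter bookkeeping: in the paper's convention $GDD(n,m;k;\lambda_1,\lambda_2)$ has $m$ groups of size $n$, so your design is a GDDDP$(n,m;k;0,\lambda)$, agreeing with the paper's own proof and showing the ``$(m,n;\ldots)$'' in the statement is a transposition slip --- though note your own parenthetical on the convention is internally muddled, since you declare the first slot to be the group size and simultaneously claim $(m,n;\ldots)$ encodes $m$ groups of size $n$.

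Two small points to tighten. First, your connectivity sentence invokes ``a same-group difference,'' but no difference of two elements of $D$ ever lies in $N\setminus\{0\}$; what you actually need is that $G\setminus N$ generates $Z_{nm}$, which holds for $m\geq 2$: any subgroup $H\supseteq G\setminus N$ has $G\setminus H\subseteq N$ while $1\in N\cap H$, and an index computation forces $H=G$ (alternatively, every element of $N$ is a sum of two elements outside $N$). Second, the distinctness of blocks, which you assert rather than prove, needs one line: if $j-D=j'-D$ with $t:=j'-j\neq 0$, then $D+t=D$, so $t$ has at least $k$ representations as a difference from $D$; this is impossible because elements of $N\setminus\{0\}$ have none and elements outside $N$ have exactly $\lambda<k$ (since $\lambda n(m-1)=k(k-1)$ and $k\leq m$, as $D$ meets each coset of $N$ at most once, whence $k-1\leq m-1<n(m-1)$ for $n\geq 2$). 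With these repairs your argument is complete and, if anything, more self-contained than the published proof.
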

\begin{proof} 
By definition the dihedrant is bipartite. 
By direct verification one sees  that the distance distribution diagram with respect to any vertex is FIGURE \ref{fg:2} with $k_4=n$ and $c_2=\lambda$. 
It easily follows that it is the point-block incidence graph of a GDDDP$(n, m; k; 0, \lambda)$. 
That the dihedrant is 2-walk-regular follows from Theorem \ref{gdddp}.
\end{proof}

The graphs $\Gamma(d, q, n)$ as considered in the last section arise from 
cyclic relative difference sets with parameters $(\frac{q^d-1}{q-1},n,q^{d-1},\frac{q^{d-2}q-1}{n})$. 
And Arasu et al. \cite{Arasu01,Arasu95} gave constructions for cyclic relative difference sets 
with parameters $(\frac{q^d-1}{q-1}, n,q^{d-1},\frac{q^{d-2}(q-1)}{n})$ for $q$ a prime power, where $n$ is a divisor of $q-1$ when q is odd or d is even, and $n$ is a divisor of $2(q-1)$ when $q$ is even and $d$ is odd. 
Arasu et al. \cite[Theorem 1.2]{Arasu01} showed that for a prime power $q$, 
cyclic relative difference sets with parameters $(\frac{q^d-1}{q-1}, n,q^{d-1},\frac{q^{d-2}(q-1)}{n})$ exists 
if and only if the above restrictions are satisfied.

There exists a 2-arc transitive dihedrant on 28 vertices with valency 4 such that the distance distribution diagram with respect to any vertex is as in FIGURE \ref{fg:3}. 
This graph corresponds to the $(7,2;4,1)$-cyclic relative difference set  $\{0,1,9,11\}$ in $Z_{14}$ relative to $\{0,7\}$. 

%

\begin{figure}[h!]
	\includegraphics{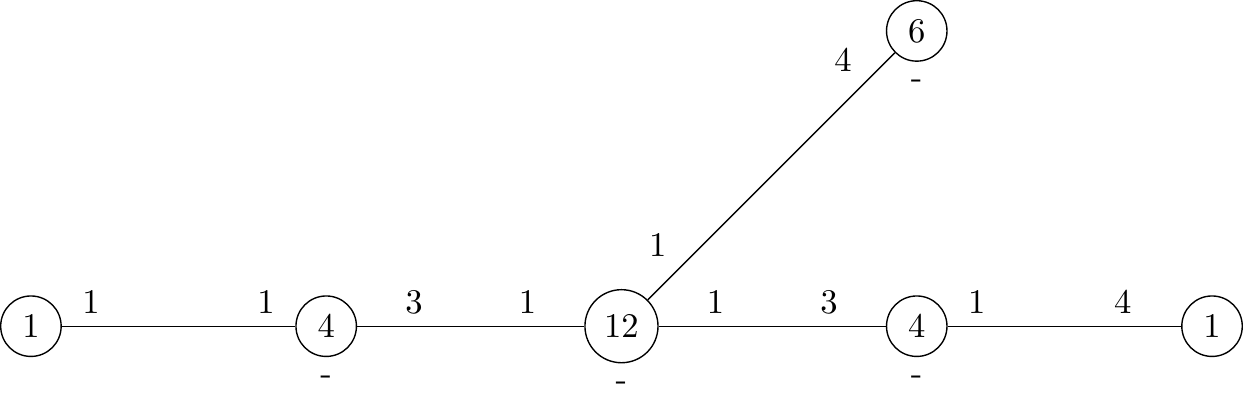}
	\caption{}
	\label{fg:3}
\end{figure}

Note that there exist cyclic relative difference sets such that the corresponding dihedrants are
not even 1-arc-transitive.
For example, the $(13,2;9,3)$-cyclic relative difference set $\{0, 9, 11, 15, 18, 19, 20, 23, 25\}$ in $Z_{26}$ relative to $\{0,13\}$ generates a dihedrant which is not 1-arc-transitive and has the same distance distribution diagram of $\Gamma(3,3)$ as in FIGURE \ref{fg:4}, which is $2$-arc transitive. 

%

\begin{figure}[h!]
	\includegraphics{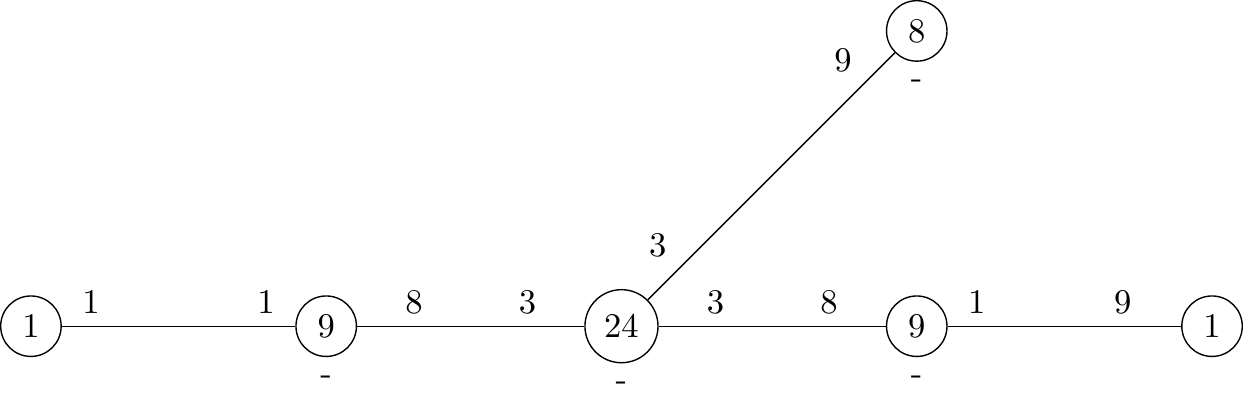}
	\caption{}
	\label{fg:4}
\end{figure}

\noindent

{\bf Remark}  The graphs $\Gamma(d, q)$ can also be described in a pure group theoretical way as a bi-coset graph (see Du and Xu \cite{Du00}). 

Take $G=$GL$(d,q)$,  and let  $R$ be the set of matrices in $G$   whose first row equals $(1, 0, 0, \ldots, 0)$; and  let $L$ be the set of matrices
in $G$ whose first column equals $( 1, 0, 0, \ldots, 0)$. Note that $R$ and $L$ are subgroups of $G$.  Then $\Gamma(d, q)$ is isomorphic to the bi-coset graph
$X=X(G, R, L, LR)$, which is bipartite with color classes  $\{ Rg \mid g \in G\}$ and $\{ Lg \mid g \in G\}$,  
where $Rg_1$ is adjacent to $Lg_2$ if and only if $g_2g_1^{-1}\in LR$.

\noindent
{\bf Acknowledgments} SFD is  partially supported by the National Natural Science Foundation of China (No.11271267) and the  National Research Foundation for the Doctoral Program of Higher Education of China (20121108110005).  JHK is partially supported by the National Natural Science Foundation of China (No. 11471009). He also acknowledges the financial support of 
the Chinese Academy of Sciences under its '100 talent' program.

\bibliographystyle{plain}
\bibliography{ref}

\end{document}